\documentclass[oneside,english]{amsart}
\usepackage[T1]{fontenc}
\usepackage[latin9]{inputenc}
\usepackage{geometry}
\geometry{verbose,tmargin=3cm,bmargin=3cm,lmargin=2.5cm,rmargin=2.5cm,footskip=1cm}
\usepackage{babel}
\usepackage{amsbsy}
\usepackage{amstext}
\usepackage{amsthm}
\usepackage{amssymb}
\usepackage{graphicx}
\usepackage[all]{xy}
\usepackage[unicode=true,pdfusetitle,
 bookmarks=true,bookmarksnumbered=false,bookmarksopen=false,
 breaklinks=false,pdfborder={0 0 1},backref=false,colorlinks=false]
 {hyperref}
\usepackage{xcolor}
\usepackage{soul}
\makeatletter
\numberwithin{equation}{section}
\numberwithin{figure}{section}
\theoremstyle{plain}
\newtheorem{thm}{\protect\theoremname}
\theoremstyle{plain}
\newtheorem{cor}[thm]{\protect\corollaryname}
\theoremstyle{plain}
\newtheorem{lem}[thm]{Lemma}
\theoremstyle{definition}
\newtheorem{defn}[thm]{\protect\definitionname}
\newtheorem{rem}{\protect\remarkname}
\newcommand{\asplice}{\ensuremath{\makebox[0.3cm][c]{\raisebox{-0.3ex}{\rotatebox{90}{$\asymp$}}}}}


\author{Hongtaek Jung}
\address{Center for Geometry and Physics, Institute for Basic Science}
\email{htjung@ibs.re.kr}
\author{Sungkyung Kang}
\address{Center for Geometry and Physics, Institute for Basic Science}
\email{sungkyung38@icloud.com}
\author{Seungwon Kim}
\address{Center for Geometry and Physics, Institute for Basic Science}
\email{math751@ibs.re.kr}
\subjclass[2010]{Primary 57M25; secondary 57M27}
\keywords{Quasi-alternating knots; $\mathfrak{sl}_n$ homology; Turaev genus}

\makeatother

\providecommand{\corollaryname}{Corollary}
\providecommand{\definitionname}{Definition}
\providecommand{\remarkname}{Remark}
\providecommand{\theoremname}{Theorem}

\begin{document}

\title{Concordance invariants and the Turaev genus}

\begin{abstract}
We show that the differences between various concordance invariants of knots, including Rasmussen's $s$-invariant and its generalizations $s_n$-invariants, give lower bounds to the Turaev genus of knots.  
Using the fact that our bounds are nontrivial for some quasi-alternating knots, we show the additivity of Turaev genus for a certain class of knots. This leads us to the first example of an infinite family of quasi-alternating knots with Turaev genus exactly $g$ for any fixed positive integer $g$, solving a question of Champanerkar-Kofman.
\end{abstract}
\maketitle

\section{Introduction}


A link diagram is alternating if its crossings alternate overpass and underpass when we travel along the diagram. An alternating link is a link that has an alternating link diagram. 

Alternating links have many interesting properties, with one of them being that their minimal crossing number is realized by their reduced alternating diagrams. This property was originally conjectured by Tait in the 1880's and proved by several mathematicians using the Jones polynomial \cite{kauffman1987state, murasugi1987jones, thistlethwaite1987spanning}. Turaev \cite{Tu1} gave a new proof of Tait's conjecture by introducing the \emph{Turaev surface}, which comes from the two extremal Kauffman states. 

It turns out that the Turaev surface has its own interesting properties. For instance, it is a Heegaard surface of $S^3$, and the link has an alternating projection which gives a disk decomposition of the Turaev surface. This allows us to consider every link as a generalized alternating link, i.e., an alternating link on some Heegaard surface. Moreover, the \emph{Turaev genus}, the minimal genus among all possible Turaev surfaces of a given knot, can be considered as a distance between a link and the set of alternating links. 
There are many previous works about computing Turaev genus \cite{champanerkar2007graphs, lowrance2008knot, abe2009turaev, dasbach2011turaev, DasbachLowrance}, but no general method to compute its exact value is known yet.

In this paper, we find an infinite family of new lower bounds of the Turaev genus. To state our result, we need to introduce a class $\mathcal{DL}$ of 
link concordance invariants. One can find its definition in Section \ref{mainsection}. We just remark here that this class $\mathcal{DL}$ contains the following well-known invariants.
\begin{enumerate}
    \item Rasmussen's $s$-invariant;
    \item $\frac{s_{n}}{1-n}$ for $n\ge 2$ where $s_n$ is the $\mathfrak{sl}_n$ link invariant with a suitable normalization;
    \item $-\sigma$, the negative of the link signature;
    \item $2\nu_0 -\ell +1$ where $\nu_0$ is a slice-torus link invariant in the sense of Cavallo-Collari and $\ell$ is the number of link components. In particular, $2\tau-\ell +1$, where $\tau$ is the Ozsv\'{a}th-Szab\'{o} $\tau$-invariant. 
\end{enumerate} 
Then our theorem on the lower bound to $g_T$ reads
\begin{thm}\label{theoremB}
Let $\mu,\nu$ be  in $\mathcal{DL}$.  Then for any knot $K$, we have 
\[
\frac{1}{2} | \mu(K) - \nu (K) | \le g_T(K).
\]
\end{thm}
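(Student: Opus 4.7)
The plan is to prove the theorem via a cobordism-triangle-inequality argument, reducing the comparison between $\mu$ and $\nu$ to their common behavior on alternating links. First, I would unpack the definition of $\mathcal{DL}$ from Section~\ref{mainsection}. Based on the naming (suggesting Dasbach--Lowrance) and the list of examples, I expect the defining axioms of $\mathcal{DL}$ to bundle together (i) agreement of all $\mu\in\mathcal{DL}$ on alternating links, with common value determined by $-\sigma$, and (ii) a slice-genus/cobordism bound of the form $|\mu(K_0)-\mu(K_1)|\leq 2g(C)$ for any orientable cobordism $C:K_0\to K_1$. Both properties are classical facts for each of the listed examples: the $s$-, $s_n$-, $\tau$-, and $\sigma$-invariants all compute the slice genus on alternating knots, and all satisfy such cobordism-type inequalities with the normalizations given.

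The geometric heart of the argument is then to produce, given a diagram $D$ of $K$ realizing $g_T(K)=g$, an alternating link $L$ together with a cobordism $C:K\to L$ in $S^3\times[0,1]$ of genus at most $g_T(K)/2$. The Turaev surface $F(D)\subset S^3$, being a Heegaard surface of genus $g$ on which $K$ sits as an alternating projection, is the natural source: the two Kauffman states yield alternating link diagrams $D_A,D_B$, and resolving the appropriate crossings of $D$ one at a time gives a saddle-move cobordism from $K$ to one of $D_A,D_B$. I would then take $L$ to be this alternating link. The technical step is to verify that the genus of this cobordism is controlled by $g_T(K)/2$ by bookkeeping how the Turaev surface's genus accumulates crossing-by-crossing.

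Granting these ingredients, the triangle inequality concludes the proof:
\[
|\mu(K)-\nu(K)|\leq|\mu(K)-\mu(L)|+|\mu(L)-\nu(L)|+|\nu(L)-\nu(K)|\leq 2g_T(K),
\]
where the middle term vanishes by property (i) because $L$ is alternating, and each outer term is at most $g_T(K)$ by property (ii) combined with the cobordism constructed above. Dividing by $2$ yields the theorem.

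The main obstacle is producing the cobordism of the correct genus. Any cobordism from $K$ to some alternating link is easy to construct by undoing crossings, but pinning down its genus as $\leq g_T(K)/2$ (rather than a larger multiple such as the alternation number, which is known to dominate $g_T$) requires a careful analysis tied to the Turaev surface itself. A reasonable backup, should the cobordism argument not cleanly give the sharp constant, is a direct spectral-sequence or homological-width approach in the spirit of Dasbach--Lowrance, bounding the ``thickness'' of a link homology theory attached to each $\mu\in\mathcal{DL}$ by $g_T(K)$; I would keep this in reserve in case the axiomatics of $\mathcal{DL}$ in Section~\ref{mainsection} turn out to support such an argument more naturally than the cobordism one.
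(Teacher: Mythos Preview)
Your guess at the axioms of $\mathcal{DL}$ is off in a way that matters. The paper's Definition~\ref{DL} requires (1) a band-surgery bound $|\nu(K)-\nu(L)|\le n$ when $L$ is reached from a knot $K$ by $n$ oriented band moves, and (2) the inequality $s_B(D)-n_-(D)-1\le \nu(L)\le 1+n_+(D)-s_A(D)$ whenever $D$ is a \emph{positive} (resp.\ \emph{negative}) diagram of a non-split link $L$. Crucially, axiom~(2) concerns positive/negative links, not alternating ones; positive links need not be alternating, and agreement of DL invariants on alternating links is a consequence (see the remark after the proof), not an axiom.

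The paper's proof does not pass through a single alternating link via a triangle inequality. Instead, Lemma~\ref{mainlem} extends the inequality of axiom~(2) to \emph{every} diagram $D$ of \emph{every} knot $K$: one $A$-smooths the positive crossings of $D$ (retaining a spanning tree's worth of them to keep the diagram connected) to reach a non-split negative link $K^-$, applies axiom~(2) to $K^-$, and combines with the band-surgery bound to obtain $\nu(K)\le 1+n_+(D)-s_A(D)$; the lower bound is dual, via a positive link $L^+$. Thus both $\mu(K)$ and $\nu(K)$ lie in the common interval $[\,s_B(D)-n_-(D)-1,\ 1+n_+(D)-s_A(D)\,]$, whose width is $2+c(D)-s_A(D)-s_B(D)=2g_T(D)$. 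Minimizing over $D$ finishes the proof.

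Your proposed route---a cobordism of genus at most $g_T(K)/2$ from $K$ to an alternating link, followed by a triangle inequality---is not what the paper does, and I do not see how to produce such a cobordism in general. The cobordisms in Lemma~\ref{mainlem} go to positive/negative links and involve $n_\pm(D)-\sharp E(T^\pm)$ band moves, which can far exceed $g_T(D)$; the sharp bound emerges only because the cobordism cost and the value of $\nu$ on the target link conspire (the spanning-tree terms $\sharp E(T^\pm)$ cancel) to give endpoints depending only on $n_\pm(D)$ and $s_{A/B}(D)$. A single-midpoint triangle inequality cannot exploit this cancellation, so the obstacle you flagged is real and, as far as I can tell, fatal to that approach.
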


Note that our lower bound covers the previous result of Dasbach-Lowrance \cite{dasbach2011turaev}.  

 One can consider the Turaev surface as a Jones polynomial theoretic, or more generally, Khovanov homology theoretic way to generalize alternating links. Then one can ask what is the Heegaard Floer homology theoretic generalization of alternating links.
 
 
 In Heegaard Floer homology, alternating links are very simple objects, since the branched double cover $\Sigma(K)$ of an alternating knot $K$ has the simplest possible $\widehat{HF}(\Sigma(K))$. In other words, $\Sigma(K)$ is an $L$-space. This result leads one to define a natural generalization of alternating links; we say that a link is quasi-alternating if it admits a diagram so that its two possible resolutions at a crossing are again quasi-alternating and the sum of the determinants of two resolutions is the same as the determinant of the given link. Note that for any quasi-alternating knot $K$, the branched double cover $\Sigma(K)$ is an $L$-space.
 
All alternating links are quasi-alternating. However, distinguishing quasi-alternating knots from alternating ones is a hard problem. One natural way to do that is to consider the Turaev genus of a quasi-alternating knot and see whether it is nontrivial. However, all known lower bounds to the Turaev genus vanish for quasi-alternating knots. Champanerkar and Kofman \cite{champanerkar2009twisting, champanerkar2014survey} asked whether the Turaev genus of quasi-alternating knots can take any non-negative integral value.



As a corollary of Theorem \ref{theoremB}, we answer the question of Champanerkar-Kofman in the affirmative way. More precisely, we will prove the following theorem.
\begin{thm}\label{theoremA}
For each integer $g>0$, there are infinitely many quasi-alternating knots whose Turaev genus equals $g$.
\end{thm}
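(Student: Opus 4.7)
The plan is to produce a single ``seed'' quasi-alternating knot $K_{*}$ for which Theorem~\ref{theoremB} already forces $g_T(K_{*})=1$, and then to bootstrap to arbitrary genus by taking connected sums. Concretely, I would search tables of small quasi-alternating knots for one whose Rasmussen invariant and (negative) signature disagree, say $|s(K_{*})+\sigma(K_{*})|\ge 2$, and which admits an explicit diagram of Turaev genus at most $1$. Since both $s$ and $-\sigma$ lie in $\mathcal{DL}$, Theorem~\ref{theoremB} then supplies the matching lower bound and gives $g_T(K_{*})=1$.

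With $K_{*}$ fixed, the bootstrap rests on three standard facts: (i)~the connected sum of quasi-alternating knots is quasi-alternating; (ii)~Turaev genus is subadditive under connected sum, $g_T(K_{1}\#K_{2})\le g_T(K_{1})+g_T(K_{2})$; and (iii)~each invariant listed in the introduction ($s$, $s_n$, $\sigma$, slice-torus invariants such as $\tau$) is additive under connected sum. Forming the $g$-fold connected sum $K_{*}^{\#g}$, (i) shows it is quasi-alternating, (ii) gives $g_T(K_{*}^{\#g})\le g$, and (iii) yields
\[
|\mu(K_{*}^{\#g})-\nu(K_{*}^{\#g})|=g\,|\mu(K_{*})-\nu(K_{*})|\ge 2g
\]
for the detecting pair $(\mu,\nu)\in\mathcal{DL}\times\mathcal{DL}$. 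Theorem~\ref{theoremB} then forces $g_T(K_{*}^{\#g})\ge g$, so equality holds.

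To produce infinitely many examples of Turaev genus exactly $g$, I would connect-sum with arbitrary alternating knots: for any alternating $L$, the knot $K_{*}^{\#g}\#L$ is quasi-alternating by (i); satisfies $g_T\le g$ by (ii) together with $g_T(L)=0$; and, because Theorem~\ref{theoremB} applied to $L$ forces $\mu(L)=\nu(L)$, still obeys $|\mu-\nu|\ge 2g$ by (iii). Hence $g_T(K_{*}^{\#g}\#L)=g$, and letting $L$ range over alternating knots of arbitrarily large crossing number yields infinitely many distinct such knots by uniqueness of prime decomposition.

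The real obstacle is the seed step. As emphasized in the introduction, every previously known Turaev-genus lower bound vanishes on quasi-alternating knots, so finding a quasi-alternating $K_{*}$ on which two invariants from $\mathcal{DL}$ genuinely disagree is precisely the point at which the new input of Theorem~\ref{theoremB} must pay off; the existence of such a $K_{*}$ is what makes the argument non-vacuous. Once $K_{*}$ has been exhibited and its Turaev genus bounded above by a diagrammatic construction, the remainder of the proof is formal bookkeeping via additivity and subadditivity under connected sum.
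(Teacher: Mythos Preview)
Your overall architecture---find a quasi-alternating seed $K_*$ with $g_T(K_*)=1$ detected by Theorem~\ref{theoremB}, then propagate by connected sum using additivity of the invariants and subadditivity of $g_T$---matches the paper's strategy exactly. The difficulty is entirely in the seed step, and here your proposal contains a genuine gap.

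Your suggested detecting pair $(s,-\sigma)$ can \emph{never} work: quasi-alternating knots have thin Khovanov homology (Manolescu--Ozsv\'ath), and for Khovanov-thin knots one always has $s=-\sigma$. The same thinness forces $2\tau=-\sigma$ as well. So among the ``classical'' members of $\mathcal{DL}$ (namely $s$, $-\sigma$, $2\tau$), all coincide on every quasi-alternating knot, and no amount of searching knot tables will produce a seed via these pairs. You correctly flag the seed step as ``the real obstacle,'' but you do not identify which invariants in $\mathcal{DL}$ could possibly separate.

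The paper's resolution is to use the $\mathfrak{sl}_n$ invariants $s_n/(1-n)$ for $n\ge 3$: Lewark computed these on the quasi-alternating pretzels $K_{p,q}=P(2p+1,-2q-1,2)$ and showed they differ from $s$ by $2$ or $2-\tfrac{2}{n-1}$. Because the discrepancy may be only $2-\tfrac{2}{n-1}$, a single fixed $n$ gives $g_T(\sharp^g K_{p,q})\ge g-\tfrac{g}{n-1}$, which is not enough for large $g$; the paper then lets $n\to\infty$ to recover $g_T\ge g$. This limiting argument is an extra wrinkle your outline does not anticipate. Finally, the paper obtains infinitely many examples at each genus by varying $p,q$ rather than by connect-summing with alternating knots, though your alternative for that last step is perfectly valid.
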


\begin{rem}\label{turaevgenustwo}
Previously, the largest known Turaev genus of quasi-alternating knots was two. The Turaev genus two quasi-alternating knots can be identified by combining the work of Dasbach and Lowrance \cite{DasbachLowrance} and the work of Slavik Jablan \cite{jablan2014tables}. They are 11n95, 12n253, 12n254, 12n280, 12n323, 12n356, 12n375, 12n452, 12n706, 12n729, 12n811, 12n873.  
\end{rem}


We finish this section by giving a sketch of the proof of Theorem \ref{theoremA} assuming Theorem \ref{theoremB}. As a consequence of Theorem \ref{theoremB}, one can show the additivity of $g_T$ for a certain class of knots. 
\begin{cor}\label{theoremC}
 Let $K$ be a knot such that $s(K) + \limsup_{n\to \infty} \frac{s_n(K)}{n} \ge 2g_T(K)$. 
 Then 
 \[
 s(K) + \limsup_{n\to \infty} \frac{s_n(K)}{n} =2g_T(K).
 \]
 In particular, if $K$ and $L$ are knots with the prescribed property, then 
\[
g_T(L\sharp K) = g_T(L)+g_T(K).
\]
\end{cor}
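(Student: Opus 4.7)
The plan is to combine the lower bound from Theorem \ref{theoremB} with the standard additivity properties of $s$ and $s_n$ and sub-additivity of $g_T$. The argument splits naturally into two stages.

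\emph{Step 1 (equality for single knots).} Both $s$ and $s_n/(1-n)$ lie in $\mathcal{DL}$, so for every fixed $n\ge 2$ Theorem \ref{theoremB} gives
\[
\tfrac{1}{2}\bigl(s(K)+s_n(K)/(n-1)\bigr)\;\le\; g_T(K).
\]
Since $\tfrac{1}{n-1}-\tfrac{1}{n}\to 0$ and $s_n(K)$ grows at most linearly in $n$, passing to $\limsup$ gives the unconditional upper bound
\[
\tfrac{1}{2}\bigl(s(K)+\limsup_{n\to\infty} s_n(K)/n\bigr)\;\le\; g_T(K).
\]
The hypothesis is the reverse inequality, so equality follows.

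\emph{Step 2 (connected sums).} The sub-additivity $g_T(L\sharp K)\le g_T(L)+g_T(K)$ is standard: given Turaev surfaces for $L$ and $K$, one performs the connect sum along a trivial disk pair, producing a Turaev surface for $L\sharp K$ of the added genus. For the matching lower bound, apply Theorem \ref{theoremB} to $L\sharp K$ with $\mu=s,\ \nu=s_n/(1-n)$, and use additivity of $s$ and additivity of the $\mathfrak{sl}_n$ invariants $s_n$ under connected sum, to obtain, for every $n$,
\[
g_T(L\sharp K)\;\ge\;\tfrac{1}{2}\bigl(s(L)+s_n(L)/(n-1)\bigr)+\tfrac{1}{2}\bigl(s(K)+s_n(K)/(n-1)\bigr).
\]
Letting $n\to\infty$ and invoking the equality from Step 1 for $L$ and for $K$ individually yields $g_T(L\sharp K)\ge g_T(L)+g_T(K)$.

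\emph{Main obstacle.} The delicate point is that the two terms on the right depend on the same index $n$, while $\limsup$ is only sub-additive: in general $\limsup_n(a_n+b_n)\le \limsup_n a_n+\limsup_n b_n$, and the wrong direction of this inequality is exactly what one needs. The structural reason this works here is that Step 1 exhibits $s_n/(n-1)$ as dominated by $2g_T-s$ with equality attained in the $\limsup$, which effectively forces the relevant sequences to realize their $\limsup$ as an honest limit (via the Lewark--Lobb style monotonicity of $s_n/(n-1)$ in $n$ for knots). Making this convergence precise — and thus converting the $\limsup$ to a $\lim$ so that additivity distributes through it — is the one non-trivial technical input beyond Theorem \ref{theoremB}.
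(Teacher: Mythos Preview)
Your Steps 1 and 2 are exactly the argument the paper has in mind: the paper states Corollary~\ref{theoremC} without proof and, in the only place it is used (the proof of Theorem~\ref{theoremA}), simply combines Theorem~\ref{theoremB}, additivity of $s$ and $s_n$, and sub-additivity of $g_T$, precisely as you do.

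However, the ``main obstacle'' you flag is real, and your proposed resolution does not close it. There is no established ``Lewark--Lobb style monotonicity'' of $s_n(K)/(n-1)$ in $n$; no result in the literature guarantees that $\limsup_n s_n(K)/n$ is actually a limit for an arbitrary knot $K$. Your observation that $s_n(K)/(n-1)\le 2g_T(K)-s(K)$ for all $n$ with equality in the $\limsup$ does \emph{not} force convergence of the sequence, so the sub-additivity $\limsup(a_n+b_n)\le \limsup a_n+\limsup b_n$ remains the wrong direction. Without an extra hypothesis (e.g.\ that the limits exist for $K$ and $L$), the inequality $g_T(L\sharp K)\ge g_T(L)+g_T(K)$ is not established by your argument.

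The paper itself never confronts this point: in the proof of Theorem~\ref{theoremA} it bypasses Corollary~\ref{theoremC} and works directly with Lewark's explicit values of $s_n$ for the pretzel knots $K_{p,q}$, obtaining for each fixed $n$ the uniform bound $g-\tfrac{g}{n-1}\le g_T(\sharp^g K_{p,q})$ and only then letting $n\to\infty$. In that setting the limit exists and the issue evaporates. If you want a clean general statement, the safest fix is to replace $\limsup$ by $\lim$ in the hypothesis (i.e.\ assume $\lim_n s_n(K)/n$ exists and equals $2g_T(K)-s(K)$); this is what actually holds in the paper's examples and makes your Step~2 go through verbatim.
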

We then argue that the 
pretzel knots $K_{p,q}=P(2p+1,-2q-1,2)$ with $p \ge q$ enjoy the assumption in Corollary \ref{theoremC}. Due to \cite[Theorem 3.2]{champanerkar2009twisting}, these knots $K_{p,q}$ are known to be quasi-alternating. Since being quasi-alternating is invariant under taking the connected sum, we may deduce Theorem \ref{theoremA} by setting $K=\sharp ^g K_{p,q}$.

\subsection*{Acknowledgements:} The authors would like to thank Lukas Lewark, Adam Lowrance, Peter Feller, and Ilya Kofman for helpful conversations. The authors owe Remark \ref{turaevgenustwo} to Adam Lowrance.

This work was supported by Institute for Basic Science (IBS-R003-D1).

\section{Preliminaries}

In this section, we review our main objects of interest, quasi-alternating knots and Turaev surfaces. 

\subsection{Quasi-alternating knots}
Quasi-alternating links were first introduced by Ozsv\'{a}th and Szab\'{o} \cite{ozsvath2005heegaard} to generalize alternating links in terms of Heegaard Floer theory.

\begin{defn}
The set of \emph{quasi-alternating links} $\mathcal{Q}$ is the smallest set of links such that:

\begin{enumerate}
    \item \{unknot\} $\subset \mathcal{Q}$;
    \item if a link $L$ contains a crossing $c$ so that the two smoothings $L_{0}$ and $L_{1}$ of $L$ at $c$ are in $\mathcal{Q}$ and $\det(L)=\det(L_{0})+\det(L_{1})$, then $L \in \mathcal{Q}$.
\end{enumerate}
\end{defn}

We remark that the connected sum of two quasi-alternating knots is again quasi-alternating. 

Ozsv\'{a}th and Szab\'{o} \cite{ozsvath2005heegaard} showed that the double branched cover of $S^3$ along a quasi alternating link is always an $L$-space, which is a rational homology $3$-sphere with the simplest Heegaard Floer homology. In fact, this $L$-space is interesting object since it is related to the taut foliation \cite{ozsvath2004holomorphic,bowden2016approximating,kazez2017c0} and left-orderability of fundamental group of $3$-manifolds \cite{boyer2013spaces}, which are fundamental objects in $3$-dimensional topology. 

\subsection{Turaev surfaces and Turaev genus}

Let $D\subset S^2$ be a link diagram of a link $L$. For each crossing \includegraphics[height=0.3cm, angle=90]{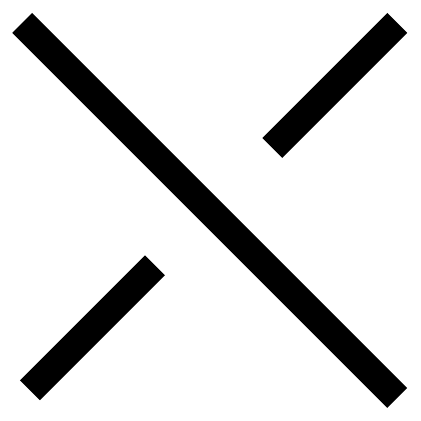} we can obtain the $A$-smoothing $\asplice$ or the $B$-smoothing $\asymp$ as in the Kauffman bracket.  If we smooth every crossing, then we get a set of simple loops on $S^2$ called a \emph{state}. 

There are two extreme cases of states. Suppose that we take the $A$-smoothing for each crossing. The resulting state is called the \emph{all-$A$ state} and is denoted as $s_A$. The other extreme is to take the $B$-smoothing for each crossing, which yields the \emph{all-$B$ state}, denoted as $s_B$.

The Turaev surface is a closed orientable surface obtained in the following way: first, we push $s_A$ to above $D$ and $s_B$ below $D$. Then we make a cobordism  from $s_A$ to $s_B$ as in Figure \ref{saddle2} by adding saddles for each crossing as in Figure \ref{saddle1}. The \emph{Turaev surface} $F(D)$ is then obtained by capping each boundary component of such cobordism off with a disk. The genus of $F(D)$ is called the \emph{Turaev genus of a diagram $D$}, denoted by $g_T(D)$. The minimal value of $g_T(D)$ over all possible diagrams $D$ of $L$ is called the \emph{Turaev genus} $g_T(L)$ of a link $L$.

\begin{figure}[!ht]
    \centering
    \includegraphics[width=0.5\textwidth]{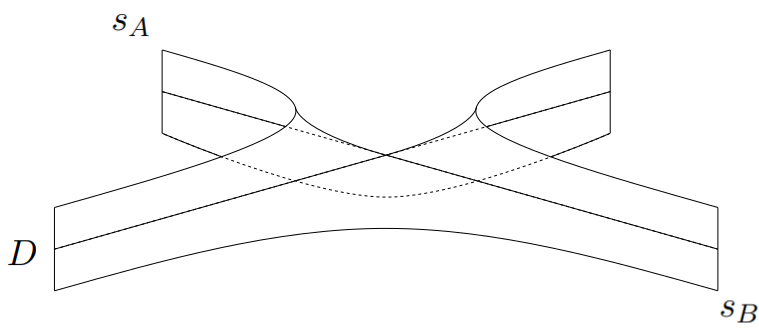}
    \caption{A local model of a Turaev surface near a saddle \cite{kim2019turaev}.}
    \label{saddle1}
\end{figure}
\begin{figure}[!ht]
    \centering
    \includegraphics[width=0.5\textwidth]{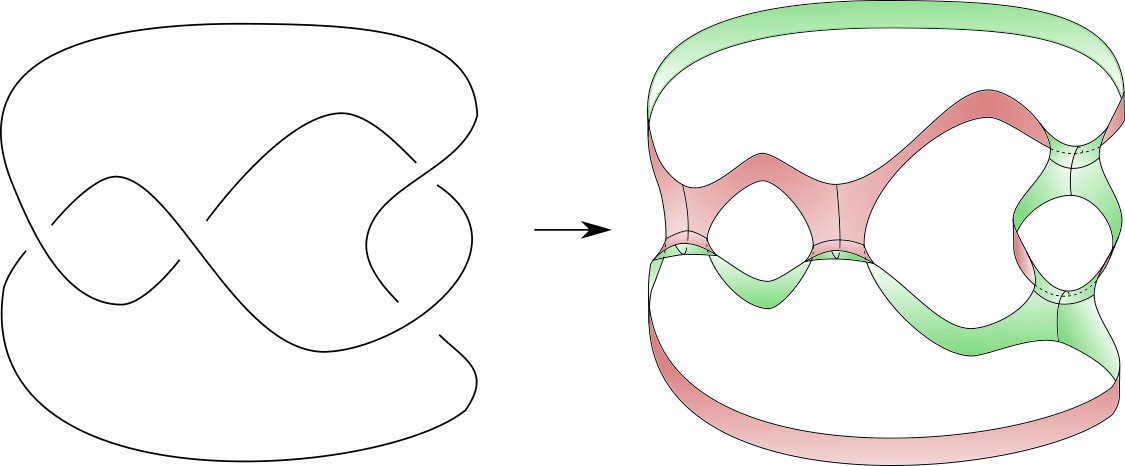}
    \caption{A Turaev surface, before capping-off \cite{kim2019turaev}.}
    \label{saddle2}
\end{figure}


We record some properties of the Turaev genus $g_T$ that are relevant to our discussion.
\begin{itemize}
    \item If $K$ is alternating, we have $g_T(K)=0$.
    \item For given two knots $K_1,K_2$, we have $g_T(K_1 \sharp K_2)\le g_T(K_1)+g_T(K_2)$ where $\sharp$ denotes the connected sum. 
\end{itemize}

Note that these properties make Turaev genus an alternating distance - a distance which measures how far a given link is from being alternating.

\section{Main results}\label{mainsection}

This section is devoted to proving our main results Theorem \ref{theoremB} and Theorem \ref{theoremA}. 
\begin{defn}
Let $L$ be an oriented link and $F:I\times I\rightarrow S^3$ be an embedding such that $L\cap F(I\times I)=F(I\times\partial I)$, where $I=[0,1]$ denotes the unit interval. Then one obtains a new link $L^{\prime}=(L\setminus F(I\times\partial I))\cup F((\partial I)\times I)$. If the orientation on $L\setminus F(I\times\partial I)$ induced by the given orientation of $L$ extends to $L^{\prime}$, then the extension is unique, so $L^{\prime}$ naturally becomes an oriented link, and we say that $L$ and $L^{\prime}$ are related by an \emph{oriented band surgery}.
\end{defn}
\begin{figure}[htb]
    \centering
    \includegraphics[width=0.5\textwidth]{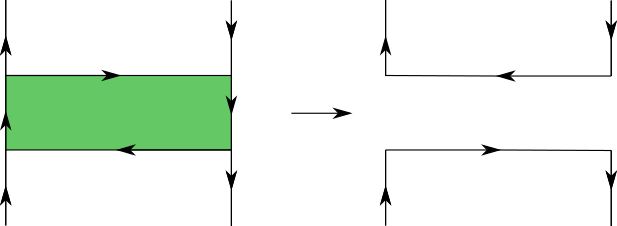}
    \caption{Oriented band surgery}
    \label{orientedbandsurgery}
\end{figure}

\begin{defn}
\label{DL}
An oriented link invariant $\nu$ which is invariant under overall orientation reversal is called a \emph{DL invariant} if it satisfies the following conditions.
\begin{enumerate}
    \item Suppose that an oriented link $L$ is obtained from a knot $K$ by performing $n$ oriented band surgeries. Then we have
    \[
    \vert \nu (K) - \nu (L) \vert \le n.
    \]
    \item For any positive (resp. negative) diagram $D$ of a non-split positive (resp. negative) link $L$,  
    \[
    s_B(D) - n_{-}(D) - 1  \leq \nu(L) \leq 1 + n_{+}(D) - s_A(D),
    \]
    where $s_A(D)$ and $s_B(D)$ denote the number of components in the all-A and all-B resolutions of $D$, respectively, and $n_{\pm}(D)$ denotes the number of positive (or negative) crossings in $D$.
\end{enumerate}
We denote by $\mathcal{DL}$ the set of DL invariants.
\end{defn}

Dasbach and Lowrance implicitly proved the following lemma in \cite{dasbach2011turaev} to get the lower bounds of the Turaev genus.

\begin{lem}[Dasbach-Lowrance]
\label{dl}
The concordance invariants $-\sigma$ and $s$ are DL invariants. Here, we are taking the extension of $s$ to links as defined by Beliakova and Wehrli in \cite{beliakova2008categorification}. Also, $2\tau$ satisfies the inequality in condition (2) in Definition \ref{DL} for knots.
\end{lem}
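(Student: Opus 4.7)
The plan is to verify the two conditions of Definition \ref{DL} separately for each of the three invariants. Condition (1) is a band-surgery Lipschitz bound that I would deduce from the standard cobordism inequality, while condition (2) is a pair of slice--Bennequin-type estimates for positive/negative diagrams that I would split into an upper bound coming from Seifert's algorithm and a lower bound coming from Rasmussen-type inequalities.

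For condition (1), I would note that an oriented band surgery from $L$ to $L'$ is realized by an orientable cobordism $\Sigma\subset S^3\times I$ of Euler characteristic $-1$; stacking the $n$ band cobordisms produces an oriented cobordism from $K$ to $L$ of Euler characteristic $-n$. Both $\nu=-\sigma$ (classical, after Murasugi and Kauffman--Taylor) and $\nu=s$ in the Beliakova--Wehrli link extension satisfy the cobordism inequality $|\nu(L)-\nu(L')|\le -\chi(\Sigma)$, giving $|\nu(K)-\nu(L)|\le n$. Since condition (1) is not asserted for $2\tau$, nothing further is needed there.

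For condition (2), let $D$ be a positive diagram of a non-split positive link $L$; the negative case will follow by applying the positive statement to the mirror link, using $\nu(\bar L)=-\nu(L)$ for $\nu\in\{-\sigma,\,s,\,2\tau\}$. In a positive diagram the oriented resolution coincides with the all-$A$ resolution, so Seifert's algorithm yields a connected Seifert surface $F$ for $L$ with $\chi(F)=s_A(D)-n_+(D)$. Pushing $F$ into $B^4$ and invoking the slice--Bennequin inequality --- classical for $\sigma$, due to Rasmussen and Beliakova--Wehrli for $s$, and a direct consequence of $2\tau(K)\le 2g_4(K)$ for $\tau$ --- produces the upper bound
\[
\nu(L)\le -\chi(F)+1 = 1+n_+(D)-s_A(D).
\]
The matching lower bound $s_B(D)-1\le\nu(L)$ (which is the assertion in the $n_-(D)=0$ case) follows from Rasmussen-type inequalities valid for any diagram: for $s$ directly from Lee's spectral sequence in the Beliakova--Wehrli setup, for $-\sigma$ from classical Seifert-matrix bounds, and for $2\tau$ on knots from Ozsv\'ath--Szab\'o's analysis of the knot Floer complex.

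The main obstacle is to pin down the precise form of the slice--Bennequin inequality for the Beliakova--Wehrli link extension of $s$ and to keep the sign and component-count conventions aligned so that the bounds emerge in the exact form demanded by Definition \ref{DL}. Once those conventions are fixed as in Dasbach--Lowrance \cite{dasbach2011turaev}, the argument proceeds as outlined.
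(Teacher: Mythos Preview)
The paper gives no proof of this lemma; it simply records that the two axioms were implicitly verified in \cite{dasbach2011turaev}. Your outline for condition~(1) is correct and matches what one extracts from that reference: an oriented band surgery is realized by a cobordism with $\chi=-1$, and both $-\sigma$ and the Beliakova--Wehrli $s$ satisfy the Lipschitz bound $|\nu(L)-\nu(L')|\le -\chi(\Sigma)$.

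Your treatment of condition~(2), however, has a real gap in the lower-bound half. For a positive diagram $D$ the Seifert (oriented) resolution is the all-$A$ state, so every slice--Bennequin / Seifert-form inequality you invoke is expressed in terms of $s_A(D)$, not $s_B(D)$. In particular, your Seifert-surface argument only yields $|\nu(L)|\le 1+n_+(D)-s_A(D)$; it says nothing about $s_B(D)-1\le\nu(L)$. The phrase ``Rasmussen-type inequalities valid for any diagram'' does not name an inequality that produces $s_B(D)$ on the left-hand side, and your mirroring reduction does not help either: the upper bound for a negative diagram is \emph{equivalent}, under mirroring, to the lower bound for a positive diagram, so you cannot derive one from the other.

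What actually closes this gap (and what Dasbach--Lowrance use) is different for the two families of invariants. For $s$ and for $2\tau$ on knots, the upper bound is in fact an \emph{equality} on positive links: $s(L)=2\tau(K)=1+n_+(D)-s_A(D)$ by Rasmussen's computation and the analogous Ozsv\'ath--Szab\'o result $\tau=g$ for positive knots. The lower bound $s_B(D)-1\le\nu(L)$ then reduces to $s_A(D)+s_B(D)\le 2+c(D)$, i.e.\ $g_T(D)\ge 0$. For $-\sigma$ this equality can fail (already for $T_{3,7}$), so one instead applies the Gordon--Litherland formula to the all-$B$ checkerboard surface $F$: here $b_1(F)=1-s_B(D)+n_+(D)$ and the correction term is $n_+(D)$, giving $\sigma(L)\le 1-s_B(D)$ directly. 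Either way, the missing ingredient is not a generic ``Seifert-matrix bound'' but a statement tied specifically to the all-$B$ state.
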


The fact that $-\sigma$ and $s$ are DL invariants can be generalized to the setting of slice-torus invariants. Cavallo and Collari defined the notion of slice-torus link invariants in \cite{cavallo2018slice}; we write their definition here for self-containedness.

\begin{defn}
A slice-torus link invariant is a real-valued link invariant $\nu$ which is invariant under strong link concordance, i.e. disjoint copies of smoothly and properly embedded cylinders in $S^3 \times I$, and satisfies the following conditions.
\begin{enumerate}
    \item If $L_1$ and $L_2$ are related by an oriented band surgery and $L_1$ has one component less than $L_2$, then 
    \[
    \nu(L_2)-1 \le \nu(L_1) \le \nu(L_2);
    \]
    \item $\nu$ is additive under split union;
    \item If $L$ is a $k$-component link, then 
    \[
    0 \le \nu(L) + \nu(-L^{\ast}) \le k,
    \]
    where $-L^{\ast}$ is the mirror image of $L$ with all orientations reversed;
    \item For any torus knot $K_{p,q}$, we have $\nu(K_{p,q}) = \frac{(p-1)(q-1)}{2}$.
\end{enumerate}
\end{defn}

Using the above definition, we can easily prove that any slice-torus link invariant naturally induces a DL invariant.

\begin{lem}
\label{dl-ineq}
Let $\nu_{0}$ be a slice-torus link invariant in the sense of Cavallo-Collari. Then the oriented link invariant $\nu = 2\nu_{0}- \ell +1$, where $\ell$ denotes the number of components, is a DL invariant.
\end{lem}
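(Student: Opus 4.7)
The plan is to verify the two conditions of Definition \ref{DL}, together with invariance under overall orientation reversal, for $\nu = 2\nu_0 - \ell + 1$. Reversal invariance is immediate: $\ell$ is preserved, and any slice-torus link invariant, being defined on strong concordance classes, is insensitive to overall orientation reversal. For condition (1), given a chain of $n$ oriented band surgeries $K = L_0 \to \cdots \to L_n = L$, at each step $\ell$ changes by $\pm 1$, and Cavallo--Collari axiom (1) forces $\nu_0(L_{i+1}) - \nu_0(L_i) \in [-1, 0]$ when $\ell$ decreases by one and $\in [0, 1]$ when $\ell$ increases by one. In either case
\[
\nu(L_{i+1}) - \nu(L_i) \;=\; 2\bigl[\nu_0(L_{i+1}) - \nu_0(L_i)\bigr] - \bigl[\ell(L_{i+1}) - \ell(L_i)\bigr] \;\in\; [-1,1],
\]
so the triangle inequality yields $|\nu(K) - \nu(L)| \le n$.

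For condition (2), fix a positive connected diagram $D$ of a non-split positive link $L$ with $c = n_+(D)$ crossings; the negative case is handled symmetrically. The all-$A$ state $U$ of $D$ is a split union of $s_A(D)$ unknots, so $\nu_0(U) = 0$ by axioms (2) and (4). Since $D$ is positive, undoing each A-smoothing back to its original crossing is an oriented band surgery, yielding a chain of $c$ oriented band surgeries from $U$ to $L$. Along this chain the number of component-decreasing steps is $(c + s_A(D) - \ell)/2$ and the number of component-increasing steps is $(c - s_A(D) + \ell)/2$; iterating axiom (1) produces $\nu_0(L) \le (c - s_A(D) + \ell)/2$, equivalent to the upper sandwich bound $\nu(L) \le 1 + n_+(D) - s_A(D)$.

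The matching lower bound $\nu(L) \ge s_B(D) - 1$ is the main obstacle. The strategy is to prove that the upper bound above is sharp, namely $\nu_0(L) = (c - s_A(D) + \ell)/2$, equivalently $\nu_0(L) = g_{\ast}(L) + \ell - 1$, on positive links. Sharpness is the slice-torus link-invariant analogue of Rasmussen's formula for $s$ on positive links and is built into the Cavallo--Collari framework \cite{cavallo2018slice}: one exhibits an oriented band-surgery cobordism from $L$ to a standardized torus link whose $\nu_0$ is determined by axiom (4), and then compares $\nu_0$ across the cobordism via axiom (1) in the reverse direction to force the matching lower bound. Granting sharpness, $\nu(L) = 1 + n_+(D) - s_A(D)$, and the required lower bound reduces to the Kauffman state inequality $s_A(D) + s_B(D) \le c + 2$, i.e.\ non-negativity of $g_T(D)$. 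The negative case is handled via a mirror argument: apply the positive-link formula to $-L^{\ast}$ (which is positive with positive diagram $-D^{\ast}$) and combine with axiom (3) to transfer the sandwich inequality back to $L$.
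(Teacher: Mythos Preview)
Your verification of condition (1) is correct and matches the paper. For condition (2), however, your route diverges significantly from the paper's proof, which is a one-line reduction: by \cite[Theorem 1.2]{cavallo2018slice}, all Cavallo--Collari slice-torus link invariants take the \emph{same} value on every positive link and on every negative link; since one such invariant yields $\nu = s$, and $s$ is already known to be a DL invariant (Lemma \ref{dl}), condition (2) follows immediately. Your argument instead re-derives the positive-link sharpness and then appeals to $s_A(D)+s_B(D)\le c+2$ for the matching bound, which is fine for positive links.

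The gap is in your treatment of the negative case. Transferring the sandwich from $-L^{\ast}$ back to $L$ via axiom (3) is too lossy: axiom (3) only gives $0 \le \nu_0(L)+\nu_0(-L^{\ast}) \le \ell$, which translates to $2-2\ell \le \nu(L)+\nu(-L^{\ast}) \le 2$. Combining this with the sharp value $\nu(-L^{\ast}) = 1+n_-(D)-s_B(D)$ yields
\[
s_B(D)-n_-(D)+1-2\ell \;\le\; \nu(L) \;\le\; s_B(D)-n_-(D)+1,
\]
which misses the required lower bound $s_B(D)-n_-(D)-1$ by $2\ell-2$ and does not give the required upper bound $1-s_A(D)$ at all. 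The mirror trick only closes up when $\ell=1$. The clean fix is exactly what the paper does: invoke the Cavallo--Collari determination of $\nu_0$ on negative links directly (their Theorem 1.2 covers both signs), rather than trying to recover it from the positive case through axiom (3). A smaller point: your claim that strong-concordance invariance automatically implies invariance under overall orientation reversal is not justified---$L$ and $-L$ need not be strongly concordant---though the paper does not address this either.
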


\begin{proof}
If $L_{1}$ and $L_{2}$ are related by an oriented band surgery and $\ell(L_{1}) = \ell(L_{2})-1$, then we have $\nu_{0} (L_{2})-1 \le \nu_{0}(L_{1}) \le \nu_{0}(L_{2})$ by the definition of slice-torus link invariants. Hence $\vert \nu(L_{1}) - \nu(L_{2}) \vert \le 1$, so we see that $\nu$ satisfies axiom (1) of DL invariants.
To prove axiom (2), let $L$ be a non-split positive (negative) link and $D$ be its positive (negative) diagram. Since the value of Cavallo-Collari slice-torus link invariants coincide for all positive links and negative links \cite[Theorem 1.2]{cavallo2018slice}, we only have to prove that the $s$-invariant satisfies the desired inequality, which was already given by the fact that $s$ is a DL invariant.
\end{proof}

\begin{cor}
The normalized $\mathfrak{sl}_n$ link invariants $\frac{s_{n}}{1-n}$ are DL invariants.
\end{cor}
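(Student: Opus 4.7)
The plan is to verify the two axioms of Definition \ref{DL} directly for $\nu = s_n/(1-n)$, relying on two standard properties of the $\mathfrak{sl}_n$ Khovanov-Rozansky $s$-invariant due to Wu and Lewark-Lobb, rather than trying to route the argument through \autoref{dl-ineq} (the normalization that makes $\frac{s_n}{2(n-1)}$ a slice-torus link invariant on all links is not transparent, so a direct verification seems cleanest).

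For axiom (1), I would appeal to the cobordism inequality for $s_n$: for any smoothly and properly embedded oriented cobordism $\Sigma\subset S^3 \times I$ between oriented links $L_1$ and $L_2$, one has
\[
|s_n(L_1) - s_n(L_2)| \le -(n-1)\chi(\Sigma).
\]
An oriented band surgery is realized by a saddle cobordism of Euler characteristic $-1$, so a sequence of $m$ such surgeries from a knot $K$ to an oriented link $L$ provides a cobordism of Euler characteristic $-m$. Dividing the resulting bound $|s_n(K) - s_n(L)| \le (n-1)m$ by $n-1$ gives $|\nu(K) - \nu(L)| \le m$, which is axiom (1).

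For axiom (2), I would invoke the evaluation of $s_n$ on positive and negative links. With the chosen normalization, for a non-split positive link $L$ with positive diagram $D$,
\[
\frac{s_n(L)}{1-n} = 1 + n_+(D) - s_A(D),
\]
and dually for a non-split negative link with negative diagram,
\[
\frac{s_n(L)}{1-n} = s_B(D) - n_-(D) - 1.
\]
Both equalities saturate the bounds demanded by axiom (2), so the inequalities hold a fortiori.

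The main obstacle is bookkeeping the normalization and sign conventions, since $s_n$ is defined only up to an overall $(n-1)$-scaling and a grading shift in the literature: one must fix the convention so that the positive-diagram formula above has the stated sign, equivalently so that $s_n/(1-n)$ is nonnegative on positive links and reduces to Rasmussen's $s$ when $n=2$. Once this is pinned down (which is the role of the phrase \emph{``a suitable normalization''} in the introduction), both ingredients are essentially verbatim restatements of results of Wu and Lewark-Lobb, and the corollary follows immediately.
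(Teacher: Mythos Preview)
Your argument is correct, but it takes a different route from the paper. The paper derives the corollary as an immediate application of the preceding lemma on slice-torus link invariants: it simply observes that $\frac{-s_n+(\ell-1)(n-1)}{2(n-1)}$ is a Cavallo--Collari slice-torus link invariant (this is recorded as Example~2.4 in \cite{cavallo2018slice}), and then the lemma converts it into the DL invariant $2\nu_0-\ell+1=\frac{s_n}{1-n}$. In other words, the paper does exactly the thing you decided to avoid, and the normalization issue you were worried about is handled by citing Cavallo--Collari's example verbatim.

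Your direct verification is essentially an unpacking of what goes into that lemma and that example: the cobordism inequality you use for axiom~(1) is precisely the band-surgery axiom in the slice-torus definition, and the positive/negative-link evaluation you use for axiom~(2) is the content of Cavallo--Collari's Theorem~1.2 specialized to $s_n$. So the two proofs rest on the same underlying facts; the paper's version is shorter because it reuses the abstraction already set up, while yours is more self-contained at the cost of restating properties of $s_n$. One small point: axiom~(2) asks for \emph{both} inequalities in each case, not just the one you saturate; the remaining inequality (e.g.\ $s_B(D)-1\le 1+n_+(D)-s_A(D)$ for a positive diagram) is equivalent to $g_T(D)\ge 0$ and hence automatic, but it is worth saying so rather than leaving it to ``a fortiori.''
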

\begin{proof}
$\frac{-s_{n}+(\ell -1)(n-1)}{2(n-1)}$ is a Cavallo-Collari slice-torus invariant for  each $n$, as mentioned in \cite[Example 2.4]{cavallo2018slice}.
\end{proof}



Now we are ready to prove that the Dasbach-Lowrance inequality also holds for all DL invariants.
\begin{lem}
\label{mainlem}
Let $K$ be a knot, $D$ be its diagram, and $\nu$ be a DL invariant. Then we have 
\[
{s}_{B}(D)-{n}_{-}(D)-1 \le \nu(K) \le 1+{n}_{+}(D)-{s}_{A}(D).
\]
\end{lem}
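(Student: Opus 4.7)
The plan is to use the two axioms of a DL invariant in tandem: axiom~(1) to interpolate between $K$ and suitable positive and negative links associated to $D$, and axiom~(2) to bound $\nu$ on those auxiliary links.

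Starting from $D$ I would build two auxiliary diagrams. Let $D_+$ be obtained by replacing each of the $n_-(D)$ negative crossings of $D$ with its oriented (Seifert) smoothing; then $D_+$ is a positive diagram of a positive link $L_+$, and since each oriented smoothing is realized by a single oriented band surgery, $L_+$ is obtained from $K$ by exactly $n_-(D)$ oriented band surgeries. Symmetrically, replacing each of the $n_+(D)$ positive crossings of $D$ by its oriented smoothing yields a negative diagram $D_-$ of a negative link $L_-$, obtained from $K$ by $n_+(D)$ oriented band surgeries. Axiom~(1) then gives
\[
|\nu(K)-\nu(L_+)|\le n_-(D), \qquad |\nu(K)-\nu(L_-)|\le n_+(D).
\]

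Next I would match Kauffman state counts. Because the oriented smoothing at a positive (resp.\ negative) crossing equals its $A$-smoothing (resp.\ $B$-smoothing), the all-$B$ resolution of $D_+$ coincides as a collection of circles with the all-$B$ resolution of $D$: the former negative crossings already carry the $B$-smoothing in $D_+$, while the remaining positive crossings receive the $B$-smoothing in both resolutions. Hence $s_B(D_+)=s_B(D)$, and a dual argument shows $s_A(D_-)=s_A(D)$. Since $n_-(D_+)=n_+(D_-)=0$, axiom~(2) applied to the positive diagram $D_+$ and the negative diagram $D_-$ yields
\[
s_B(D)-1\le \nu(L_+) \qquad\text{and}\qquad \nu(L_-)\le 1-s_A(D).
\]
Chaining with the band-surgery inequalities above produces
\[
s_B(D)-n_-(D)-1 \;\le\; \nu(L_+)-n_-(D) \;\le\; \nu(K) \;\le\; \nu(L_-)+n_+(D) \;\le\; 1+n_+(D)-s_A(D),
\]
which is exactly the asserted chain.

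The main obstacle is that axiom~(2) is hypothesized only for \emph{non-split} positive (resp.\ negative) links, while $L_\pm$ might be split even though $K$ is a knot. I would handle this by a reduction step: whenever $L_+$ decomposes as a split union, one additional oriented band surgery joining two of its split components increases the axiom~(1) surgery count by one, but also merges a pair of circles in the all-$B$ resolution, so that the combined estimate is unchanged; iterating yields a non-split positive link to which axiom~(2) applies directly, and the symmetric manoeuvre works for $L_-$. Modulo this bookkeeping, the proof is a clean formal abstraction of the Dasbach--Lowrance argument from $s$ and $-\sigma$ to any $\nu\in\mathcal{DL}$, since the two axioms of Definition~\ref{DL} are precisely what the argument consumes.
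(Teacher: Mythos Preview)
Your overall strategy---smooth the positive (resp.\ negative) crossings, invoke axiom~(1) for the resulting cobordism and axiom~(2) for the resulting negative (resp.\ positive) link---is exactly the paper's strategy, and your identifications $s_A(D_-)=s_A(D)$, $s_B(D_+)=s_B(D)$ are correct. The gap is in your handling of the non-split case.

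Your proposed fix does \emph{not} leave the estimate unchanged. For the lower bound you are aiming at
\[
\nu(K)\ \ge\ s_B(D_+)-1-(\text{band count}),
\]
and an extra band joining two split components of $L_+$ sends the band count to $(\text{band count})+1$ and $s_B$ to $s_B-1$. Both changes lower the right-hand side, so the bound drops by $2$, not $0$; iterating over $k-1$ bands costs $2(k-1)$. (Concretely this situation does occur: in the paper's $6_2$ example the fully smoothed diagram already has two components.) The same sign problem appears symmetrically for the upper bound.

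The paper repairs this with a spanning-tree manoeuvre that moves the band count in the \emph{opposite} direction. One forms the graph whose vertices are the components of $D_-$ and whose edges are the positive crossings of $D$, chooses a spanning tree $T^+$, and $A$-smooths only the positive crossings \emph{not} in $T^+$. The resulting connected diagram $D^{conn}_-$ has $\sharp E(T^+)$ leftover positive crossings, each nugatory, so Reidemeister~I moves turn it into a genuine negative diagram $D'$ of a non-split link $K^-$. Now the band count is $n_+(D)-\sharp E(T^+)$ (fewer surgeries, not more), while $s_A(D')=s_A(D)-\sharp E(T^+)$; the two $\sharp E(T^+)$ terms enter with opposite signs and cancel, yielding exactly $\nu(K)\le 1+n_+(D)-s_A(D)$. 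The dual spanning-tree argument on the negative crossings gives the lower bound. Replacing your band-addition step by this spanning-tree reduction makes the proof go through.
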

\begin{proof}
Note that $A$-smoothing is the oriented resolution for positive crossings. Consider the link diagram we get by $A$-smoothing all positive crossings of $D$ by $D_{-}$. Then we can form a graph $\Gamma$ whose vertices are connected components of $D_{-}$ and edges are positive crossings of $D$. Choose a spanning tree $T^{+}$ of $\Gamma$. Then we can $A$-smooth all positive crossings except those which form an edge of $T^{+}$ to get a new link diagram $D^{conn}_{-}$. This process is drawn for a case of knot $6_2$ in Figure \ref{Fig1}.

\begin{figure}[h]
\centering
\resizebox{.8\textwidth}{!}{\includegraphics{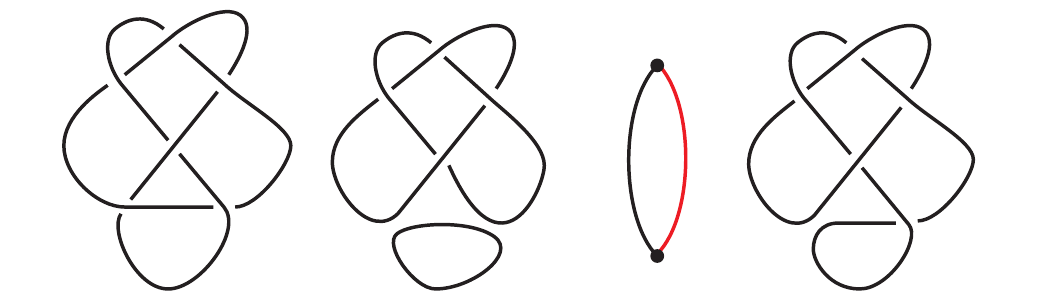}}
\caption{\label{Fig1} Left, a diagram of the knot $6_2$ with two positive crossings. Center-left, the negative diagram one gets by $A$-smoothing all positive crossings. Center-right, the graph $\Gamma$, where the red edge denotes the spanning tree $T^{+}$. Right, the diagram $D^{conn}_{-}$.}
\end{figure}

We claim that $D^{conn}_{-}$ is isotopic to a negative diagram. To prove this, we choose a component $D_{0}$ of $D_{-}$, which corresponds to a leaf of the tree $T^{+}$. Such a component must be \emph{innermost}, which means that the crossings of $D$ that correspond to edges of $T^{+}$ do not lie on any of the bounded components of $\mathbb{R}^{2} \setminus D_{0}$. The component $D_{0}$ is then connected to exactly one positive crossing in $D^{conn}_{-}$, so we can simply untwist to remove that crossing because $D_{0}$ is innermost. Repeating this process inductively gives us a negative diagram $D^{\prime}$ which is isotopic to $D^{conn}_{-}$.

Since $D^{\prime}$ is negative and connected, we have $n_{+}(D^{\prime})=0$, and the link $K^{-}$ represented by $D^{\prime}$ is non-split; see, for instance, \cite[Corollary 3.1]{cromwell1989homogeneous}. Also, by the construction of $D^{\prime}$, the all-$A$ smoothing of $D^{\prime}$ is isotopic to the smoothing of $D$ one gets by $A$-smoothing all crossings except those arising as edges of $T^{+}$ and then $B$-smoothing the rest. Hence we have $s_A(D^{\prime})=s_A(D)-\sharp E(T^{+})$, where $\sharp E(T^{+})$ denotes the number of edges in $T^{+}$.

Now we can prove the given inequality. Note that the link $K^{-}$ is formed by performing $n_{+}(D)-\sharp E(T)$ oriented band surgeries on $K$. Thus there exists an oriented cobordism of Euler characteristic $-n_{+}(D)+\sharp E(T^{+})$ between $K$ and $K^{-}$, so we have the inequality 
\[
\vert \nu(K)-\nu(K^{-}) \vert \le n_{+}(D)-\sharp E(T^{+}).
\]
Furthermore, since $K^{-}$ is a non-split negative link and $D'$ is a negative diagram representing it, we also have 
\[
\nu(K^{-}) \le 1+n_{+}(D')-s_{A}(D')=1-s_{A}(D)+\sharp E(T^{+}).
\]
Therefore we get 
\[
\nu(K) \le \nu(K^{-}) + n_{+}(D)-\sharp E(T^{+}) \le 1+n_{+}(D)-s_{A}(D),
\]
which proves the upper bound. 

 To prove the lower bound, we simply dualize our arguments. Instead of $A$-smoothing positive crossings of $D$, we can $B$-smooth its negative crossings to construct a positive link diagram $D_{+}$. Then we can apply the spanning tree argument to construct a tree $T^{-}$ and a connected positive diagram $D^{conn}_{+}$. Denote the non-split link represented by the diagram $D^{conn}_{+}$ as $L^{+}$. Then $L^{+}$ is obtained from $K$ by performing $n_{-}(D)-\sharp E(T^{-})$ oriented band surgeries. Therefore we get 
\[
\nu (K) \ge \nu (L^{+}) - n_{-}(D) + \sharp E(T^{-}) \ge 1-n_{-}(D)+s_{B}(D),
\]
completing the proof.
\end{proof}

\begin{proof}[Proof of Theorem \ref{theoremB}] 
This follows from Lemma \ref{mainlem} and the fact that 
$$1+{n}_{+}(D)-{s}_{A}(D) - ({s}_{B}(D)-{n}_{-}(D)-1) = g_T(D).$$
\end{proof}

\begin{proof}[Proof of Theorem  \ref{theoremA}]
For  any positive integers $p,q$ satisfying $p\ge q$, we consider the pretzel knot $K_{p,q}=P(2p+1,-2q-1,2)$. It is proven in Theorem 3.2 of \cite{champanerkar2009twisting} that such knots are always quasi-alternating, so its $s$-invariant is determined by its signature. More specifically, we have
\[
s(K_{p,q}) = -\sigma(K_{p,q}) = 2(p-q).
\]
On the other hand, Lewark \cite{lewark2014rasmussen} showed that the value $\frac{s_{n}(K_{p,q})}{1-n}$ can be either $2(p-q)-2$ or $2(p-q)-2+\frac{2}{n-1}$. Thus we have $1-\frac{1}{n-1} \le \frac{1}{2}\lvert s(K_{p,q}) - \frac{s_n (K_{p,q})}{1-n}\rvert$. So, by Theorem \ref{theoremB} and the additivity of both $s$ and $s_n$ under connected sums, we see that the following inequality holds for all integers $g\ge 1$ and $n\ge 2$, where $\sharp^{g}K_{p,q}$ denotes the connected sum of $g$ copies of $K_{p,q}$:
\[
g-\frac{g}{n-1} \le g_{T}(\sharp^{g}K_{p,q}).
\]
Taking the limit $n\rightarrow \infty$ on both sides gives $g\le g_{T}(\sharp^{g}K_{p,q})$.

From the direct application of the Turaev surface algorithm to the pretzel knot diagram of $K_{p,q}$, their Turaev genus are all 
one. By the subadditivity of Turaev genus, we get 
\[
g_{T}(\sharp^{g}K_{p,q}) \le g\cdot g_{T}(K_{p,q}) \le g.
\]
Therefore we get $g_{T}(\sharp^{g}K_{p,q})=g$. Since quasi-alternating knots are closed under connected sum, we obtained an infinite family of quasi-alternating knots with Turaev genus exactly $g$.
\end{proof}
\begin{rem}
One can show that all DL invariants take the same value for homogeneous knots, simply by mimicking the proof of \cite[Theorem 5]{lewark2014rasmussen}. Since alternating knots are homogeneous, our lower bounds for the Turaev genus vanish for alternating knots. In contrast, quasi-alternating knots are not always homogeneous, and for those knots, the $s_n$ invariants can behave differently from $s$, $\tau$, or $\sigma$. This is why our lower bounds can be nonzero for some quasi-alternating knots.
\end{rem}

\bibliographystyle{amsalpha}

\bibliography{ref2}
\end{document}